\documentclass[11pt]{article}

\usepackage[T1]{fontenc}
\usepackage{amsmath,amssymb,amsthm}
\usepackage{graphicx}
\usepackage{hyperref}
\usepackage{url}
\usepackage[margin=1in]{geometry}

\newtheorem{lemma}{Lemma}
\newtheorem{proposition}{Proposition}
\newtheorem{theorem}{Theorem}
\theoremstyle{remark}
\newtheorem{remark}{Remark}

\title{Hodge Coercivity and Global Dynamics in Two-Field Edge-Cochain Systems with MHD-Type Cancellation}
\author{Moses Boudourides\\
School of Professional Studies, Northwestern University\\
\texttt{Moses.Boudourides@northwestern.edu}\\
ORCID: \href{https://orcid.org/0000-0002-6157-5647}{0000-0002-6157-5647}}
\date{}

\begin{document}

\maketitle

\noindent\textbf{Running title:} Hodge Coercivity and Dynamics

\begin{abstract}
A finite-dimensional two-field system for divergence-free edge cochains is introduced. Its MHD-type designation refers only to a quadratic exchange pattern and exact total-energy cancellation; it is not a physical MHD discretization. A general cancellation class is separated from a corrected explicit realization: the anticommutator $D(a)J+JD(a)$ is skew-symmetric for diagonal $D(a)$ and skew-symmetric $J$, and its projected bilinear map has the required trilinear antisymmetry. The central result is a Hodge coercivity criterion: the full divergence-free space admits the Poincar\'e-type estimate needed for dissipativity if and only if its harmonic $1$-cochain space is trivial. Under this condition, global existence, an exact energy identity, an absorbing ball, and a compact global attractor follow. When harmonic modes are present, a harmonic-decoupled interaction class yields invariant harmonic affine fibres and fibre-wise attractors. Deterministic disk, annular, and two-hole examples illustrate the spectral criterion, energy law, and distinction between general harmonic exchange and harmonic-fibre invariance.
\end{abstract}

\noindent\textbf{Keywords:} cell complexes; discrete Hodge Laplacian; dissipative dynamical systems; edge cochains; global attractor; harmonic cochains; MHD-type cancellation.

\section{Introduction and contribution hierarchy}

Discrete Hodge theory supplies a natural language for edge flows, cochains, incidence operators, and higher-order Laplacians. Foundational treatments connect the cochain complex to Hodge decompositions and graph Laplacians \cite{Lim2020,HorakJost2013}, while recent work studies random walks, consensus, and higher-order organization on simplicial complexes \cite{SchaubEtAl2020,ZieglerEtAl2022,BensonGleichLeskovec2016,BattistonEtAl2021}. These developments complement finite element exterior calculus (FEEC), where differential complexes and compatible operators guide stable discretizations \cite{ArnoldFalkWinther2006,ArnoldFalkWinther2010}. The present paper considers a different question: which long-time consequences follow for a constrained two-field edge-cochain system on a cell complex when Hodge-Laplacian dissipation is combined with an algebraic quadratic exchange law?

The system is termed \emph{MHD-type} for a limited and explicit reason. In incompressible MHD, the schematic quadratic terms have the exchange form
\[
(u\cdot\nabla)u-(h\cdot\nabla)h,
\qquad
(u\cdot\nabla)h-(h\cdot\nabla)u.
\]
The cochain system below retains the corresponding two-field exchange pattern and total-energy cancellation. It does not retain the differential transport operator, a Lorentz-force derivation, a discrete induction law, physical helicities, continuum consistency, or mesh-convergence properties. Structure-preserving MHD methods have substantially stronger objectives; examples include the compatible finite element constructions of Hu, Ma, and Xu \cite{HuMaXu2017}, the FEEC--finite-volume splitting method of Hiptmair and Pagliantini \cite{HiptmairPagliantini2018}, and discrete Lie-advection methods for differential forms \cite{MullenEtAl2011}. The model here is an MHD-inspired edge-cochain dynamical system on a cell complex, not a numerical discretization of those physical equations.

The contribution has three layers. First, a general theorem is established for continuous bilinear interactions satisfying a trilinear cancellation identity. Second, a corrected anticommutator construction provides a nonempty, directly implementable class of such interactions. Third, and centrally, topology is shown to govern whether the ordinary full-space dissipative argument is available: the required Hodge coercivity inequality holds on the divergence-free edge space exactly when harmonic $1$-cochains are absent. A further harmonic-decoupled interaction class provides a nontrivial positive result in the noncoercive case by producing invariant affine fibres and fibre-wise attractors.

The long-time motivation is related to, but distinct from, attractor theory for magnetic PDEs. Boudourides and Nikoudes \cite{BoudouridesNikoudes1990} established a maximal attractor and dimension estimates for the two-dimensional magnetic B\'enard problem. Catania and Secchi \cite{CataniaSecchi2010} proved global existence and finite-dimensional attractor results for a three-dimensional double-viscous MHD-$\alpha$ model. In the present finite-dimensional setting, compact attractor existence after an absorbing estimate is standard. The mathematical interest lies instead in identifying exactly which Hodge-topological condition supplies that estimate and in describing one structured noncoercive alternative.

\subsection{Relation with graph-Hodge dynamics}

The graph-Hodge literature supplies several relevant, but distinct, points of comparison. A Hodge decomposition separates edge fields into gradient, curl, and harmonic parts; Lim \cite{Lim2020} emphasizes that this structure can be developed by linear algebra on incidence matrices, while Horak and Jost \cite{HorakJost2013} provide a spectral framework for combinatorial Laplacians on simplicial complexes. The present phase space starts after the gradient component has been removed: $\mathcal V=\ker d_0^*$ is the divergence-free edge space, and the remaining splitting is between a dissipative complement and harmonic circulation directions.

Many existing dynamical uses of the Hodge Laplacian are linear. The normalized $1$-Laplacian of Schaub et al. \cite{SchaubEtAl2020} governs edge-space random walks, and the balanced Hodge Laplacians of Ziegler, Hlinka, and Thurner \cite{ZieglerEtAl2022} quantify how lower- and higher-order interactions affect edge consensus. Higher-order network work more generally uses simplicial complexes to represent interactions that cannot be reduced to pairs \cite{BensonGleichLeskovec2016,BattistonEtAl2021}; nonlinear synchronization on simplicial complexes provides another distinct line of development \cite{MillanTorresBianconi2020}. The present problem differs in two respects. It is a nonlinear two-field dissipative evolution rather than a diffusion, consensus, or phase-synchronization equation, and the key question is not simply spectral convergence but the availability of a global energy estimate after the nonlinear exchange terms have cancelled. Thus the role of the Hodge Laplacian here is not only spectral smoothing or consensus formation, but the creation, or failure, of a coercive energy mechanism after nonlinear cancellation.

This distinction clarifies the novelty boundary. The paper does not introduce a new general theory of Hodge Laplacians, nor does it claim a physical transport discretization. It identifies a class of nonlinear constrained cochain systems for which the Hodge kernel has a direct dynamical consequence: it is exactly the obstruction to the standard full-phase-space absorbing-set proof. The harmonic-fibre theorem below then shows how a further algebraic condition can replace full-space coercivity by fibre-wise coercivity.

\section{Finite cochain complexes and Hodge coercivity}

Let $K=(K_0,K_1,K_2)$ be a finite oriented two-dimensional cell complex, where $K_0$, $K_1$, and $K_2$ denote its vertices, edges, and faces, respectively. The real cochain spaces $C^q(K)$, $q=0,1,2$, have fixed Euclidean inner products. The coboundary operators are
\[
d_0:C^0(K)\longrightarrow C^1(K),
\qquad
d_1:C^1(K)\longrightarrow C^2(K),
\qquad
d_1d_0=0.
\]
In oriented bases, $d_0$ is the edge--vertex incidence matrix and $d_1$ records oriented face boundaries. The divergence-free edge-cochain space is
\[
\mathcal V:=\ker d_0^*\subset C^1(K),
\]
and $P:C^1(K)\to\mathcal V$ denotes the Euclidean orthogonal projector. Equivalently,
\[
P=I-d_0(d_0^*d_0)^\dagger d_0^*,
\]
where the Moore--Penrose inverse accommodates the constant nullspace. Thus $P^*=P$, $P^2=P$, and $d_0^*P=0$.

The dissipative operator is the full $1$-cochain Hodge Laplacian
\[
\Delta_1:=d_0d_0^*+d_1^*d_1.
\]
For $v\in\mathcal V$,
\begin{equation}
\langle\Delta_1v,v\rangle=\|d_1v\|^2.
\label{eq:hodgeenergy}
\end{equation}
The down-Laplacian $d_0d_0^*$ alone vanishes on $\mathcal V$ and therefore cannot provide the dissipation used below.

\begin{lemma}[Invariance and commutation]\label{lem:invariance}
The space $\mathcal V$ is invariant under $\Delta_1$, and $P\Delta_1=\Delta_1P$.
\end{lemma}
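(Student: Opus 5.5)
The plan is to show that $\Delta_1$ maps $\mathcal V$ into itself, then use self-adjointness of $\Delta_1$ to get invariance of $\mathcal V^\perp$, and finally combine the two invariances into the commutation relation.

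\textbf{Invariance of $\mathcal V$.} Take $v\in\mathcal V$, so $d_0^*v=0$. Then
\[
\Delta_1 v=d_0d_0^*v+d_1^*d_1v=d_1^*d_1 v .
\]
It remains to check that $d_0^*\Delta_1 v=d_0^*d_1^*d_1v=0$. Taking adjoints of $d_1d_0=0$ gives $d_0^*d_1^*=0$, so this holds and $\Delta_1\mathcal V\subset\mathcal V$.

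\textbf{Commutation.} The operator $\Delta_1$ is a sum of operators of the form $AA^*$ and $B^*B$, so it is self-adjoint. Since $\mathcal V$ is $\Delta_1$-invariant, its orthogonal complement is invariant too: for $w\in\mathcal V^\perp$ and $v\in\mathcal V$,
\[
\langle \Delta_1 w,v\rangle=\langle w,\Delta_1 v\rangle=0 .
\]
Alternatively, one can use $\mathcal V^\perp=\operatorname{ran} d_0$ directly: $\Delta_1 d_0 f=d_0d_0^*d_0f+d_1^*d_1d_0f=d_0(d_0^*d_0f)$, because $d_1d_0=0$. Now split an arbitrary $x\in C^1(K)$ as $x=Px+(I-P)x$. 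Then $\Delta_1x=\Delta_1Px+\Delta_1(I-P)x$, where the first term lies in $\mathcal V$ and the second in $\mathcal V^\perp$. Applying $P$ gives $P\Delta_1 x=\Delta_1Px$. Since $x$ was arbitrary, $P\Delta_1=\Delta_1P$.

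No step is a real obstacle here. The only point needing care is invariance of the complement, and that follows at once either from self-adjointness or from the identity $d_1d_0=0$.
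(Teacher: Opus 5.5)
Your proposal is correct and follows essentially the same route as the paper: invariance of $\mathcal V$ via $d_0^*d_1^*=(d_1d_0)^*=0$, then self-adjointness to get invariance of $\mathcal V^\perp$, then the orthogonal splitting to conclude $P\Delta_1=\Delta_1P$. You spell out the final splitting step more explicitly than the paper does, and your alternative argument via $\mathcal V^\perp=\operatorname{ran}d_0$ is also valid.
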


\begin{proof}
For $v\in\mathcal V$,
\[
d_0^*\Delta_1v=d_0^*d_0d_0^*v+d_0^*d_1^*d_1v=0+(d_1d_0)^*d_1v=0.
\]
Thus $\Delta_1\mathcal V\subset\mathcal V$. Self-adjointness then implies invariance of $\mathcal V^\perp$, and the orthogonal decomposition yields the commutation relation.
\end{proof}

Define the harmonic subspace and its complementary divergence-free subspace by
\begin{equation}
\mathcal H^1:=\ker\Delta_1\cap\mathcal V=\ker d_1\cap\ker d_0^*,
\qquad
\mathcal V_0:=\mathcal V\cap(\mathcal H^1)^\perp.
\label{eq:hodgedecomp}
\end{equation}
Let $Q$ be the orthogonal projector from $\mathcal V$ onto $\mathcal H^1$ and put $P_0:=I-Q$ on $\mathcal V$.

\begin{theorem}[Hodge coercivity criterion]\label{thm:coercivity}
The following are equivalent:
\begin{enumerate}
\item $\mathcal H^1=\{0\}$;
\item there exists $\lambda_*>0$ such that
\begin{equation}
\langle\Delta_1v,v\rangle\geq\lambda_*\|v\|^2
\qquad(v\in\mathcal V).
\label{eq:coercivity}
\end{equation}
\end{enumerate}
When $\mathcal H^1\neq\{0\}$, the analogous estimate holds on $\mathcal V_0$ with the smallest positive eigenvalue $\lambda_+$ of $\Delta_1|_{\mathcal V}$.
\end{theorem}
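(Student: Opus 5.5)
The argument is finite-dimensional spectral theory for the restriction $A:=\Delta_1|_{\mathcal V}$. By Lemma~\ref{lem:invariance}, $\mathcal V$ is invariant under $\Delta_1$. Since $\Delta_1$ is self-adjoint, $A$ is a self-adjoint operator on the finite-dimensional inner product space $\mathcal V$. Moreover, \eqref{eq:hodgeenergy} gives $\langle Av,v\rangle=\|d_1v\|^2\geq0$, so $A$ is positive semidefinite. The spectral theorem therefore supplies an orthonormal eigenbasis of $\mathcal V$ with eigenvalues $0\le\mu_1\le\dots\le\mu_m$. The first step is to identify $\ker A$ with $\mathcal H^1$. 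By definition, $\ker A=\ker\Delta_1\cap\mathcal V$, which is exactly $\mathcal H^1$. To confirm the second description in \eqref{eq:hodgedecomp}, note that $v\in\mathcal V$ with $Av=0$ gives $\|d_1v\|^2=0$, hence $v\in\ker d_1\cap\ker d_0^*$. Conversely, such a $v$ lies in $\ker\Delta_1$ directly from the formula for $\Delta_1$.

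For (1)$\Rightarrow$(2), assume $\mathcal H^1=\{0\}$. Then $\ker A=\{0\}$, so every eigenvalue is strictly positive, and I take $\lambda_*:=\mu_1>0$. Expanding $v=\sum c_ie_i$ in the eigenbasis gives
\[
\langle\Delta_1v,v\rangle=\sum\mu_i c_i^2\ge\mu_1\sum c_i^2=\lambda_*\|v\|^2,
\]
which is \eqref{eq:coercivity}. An equivalent route is to minimize the continuous function $v\mapsto\|d_1v\|^2$ over the compact unit sphere of $\mathcal V$. The minimum is attained, and it is positive because $d_1$ is injective on $\mathcal V$. The degenerate case $\mathcal V=\{0\}$ is vacuous and needs no argument.

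For (2)$\Rightarrow$(1), take $v\in\mathcal H^1$. Then $\Delta_1v=0$, so \eqref{eq:coercivity} gives $0\ge\lambda_*\|v\|^2$, and hence $v=0$.

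For the final assertion, note that $\mathcal V_0=\mathcal V\cap(\mathcal H^1)^\perp$ is the orthogonal complement of $\ker A$ inside $\mathcal V$. For a self-adjoint operator, this complement is the span of the eigenvectors with nonzero eigenvalues. These eigenvalues are positive, and the smallest of them is $\lambda_+$. Repeating the eigenbasis expansion for $v\in\mathcal V_0$ then gives
\[
\langle\Delta_1v,v\rangle\ge\lambda_+\|v\|^2.
\]
One can add that $\mathcal V_0$ is itself $A$-invariant and $P_0$ commutes with $A$, which the paper will need later. The estimate is sharp, since equality holds on a $\lambda_+$-eigenvector.

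There is no genuine obstacle here. The only point needing care is the step that makes the spectral theorem applicable, namely that $A$ maps $\mathcal V$ into itself and is self-adjoint there. This is exactly what Lemma~\ref{lem:invariance} provides. Without it, the quadratic form could not be diagonalized within $\mathcal V$, and the identification $\mathcal V_0=\ker A^\perp\cap\mathcal V$ would fail.
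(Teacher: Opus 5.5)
Your proposal is correct and follows the paper's proof: you apply the spectral theorem to the self-adjoint, nonnegative restriction $\Delta_1|_{\mathcal V}$ (justified by Lemma~\ref{lem:invariance}) and identify its kernel with $\mathcal H^1$. The remaining steps also match, namely the minimal eigenvalue for (1)$\Rightarrow$(2), a harmonic vector for the converse, and removal of the zero eigenspace for $\mathcal V_0$; you only add detail. Your closing claim that invariance is indispensable is overstated, since your own minimization of $\|d_1v\|^2$ over the unit sphere proves the equivalence without it; invariance is needed mainly so that ``eigenvalue of $\Delta_1|_{\mathcal V}$'' and hence $\lambda_+$ make sense.
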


\begin{proof}
The restricted operator $\Delta_1|_{\mathcal V}$ is self-adjoint and nonnegative, and its kernel is $\mathcal H^1$. If the kernel is trivial, its finite spectrum is strictly positive and its minimum is $\lambda_*$. Conversely, a nonzero harmonic vector contradicts \eqref{eq:coercivity}. Removing the zero eigenspace gives the statement on $\mathcal V_0$.
\end{proof}

Theorem~\ref{thm:coercivity} is the topological pivot of the paper. Harmonic $1$-cochains are undamped by the Hodge Laplacian, so their presence prevents a Poincar\'e inequality on the full divergence-free space. The numerical examples below make this obstruction visible through restricted spectra.

\subsection{Phase-space geometry and modal coordinates}

The decomposition in \eqref{eq:hodgedecomp} is orthogonal and preserved by the linear Hodge semigroup. If $\{\phi_j\}$ is an orthonormal eigenbasis of $\Delta_1|_{\mathcal V}$ with eigenvalues $\lambda_j\geq0$, then the harmonic vectors are precisely the modes with $\lambda_j=0$, and the remaining modes satisfy $\lambda_j\geq\lambda_+$. In the linear, unforced limit, the components of a solution satisfy
\[
u_j(t)=e^{-\nu\lambda_jt}u_j(0),
\qquad
h_j(t)=e^{-\eta\lambda_jt}h_j(0).
\]
Consequently, the Hodge Laplacian alone leaves $Qu$ and $Qh$ unchanged and exponentially damps the complementary components. This simple modal calibration is useful because it separates two issues that are sometimes conflated: a nontrivial harmonic space is a spectral obstruction to full-space coercivity, whereas its actual nonlinear evolution depends on the interaction map.

The restricted positive eigenvalue has a direct variational characterization,
\[
\lambda_+=\min_{0\neq v\in\mathcal V_0}
\frac{\langle\Delta_1v,v\rangle}{\|v\|^2}.
\]
Thus $\lambda_+$ is not merely a numerical quantity extracted from a matrix; it is the exact constant that controls every complementary-space energy estimate. The disk, annulus, and two-hole examples below have harmonic dimensions zero, one, and two, respectively. Their spectra therefore provide a direct computational realization of the alternative in Theorem~\ref{thm:coercivity}.

\section{MHD-type edge-cochain dynamics}

Let $\nu>0$ and $\eta>0$. The unknowns are $u(t),h(t)\in\mathcal V$, interpreted as velocity-like and magnetic-like edge cochains. Let $f,g\in C^1(K)$ be time-independent forcings. For a continuous bilinear map
\[
B:\mathcal V\times\mathcal V\longrightarrow\mathcal V,
\]
write
\[
b(a,b,c):=\langle B(a,b),c\rangle.
\]
The structural assumption is the trilinear antisymmetry
\begin{equation}
b(a,b,c)=-b(a,c,b)
\qquad(a,b,c\in\mathcal V).
\label{eq:antisymmetry}
\end{equation}
In particular, $b(a,b,b)=0$. Consider
\begin{align}
\dot u&=-\nu\Delta_1u+B(u,u)-B(h,h)+Pf,\label{eq:u}\\
\dot h&=-\eta\Delta_1h+B(u,h)-B(h,u)+Pg.\label{eq:h}
\end{align}
Lemma~\ref{lem:invariance} and the range condition on $B$ show that $\mathcal V\times\mathcal V$ is invariant.

\subsection{A corrected explicit realization}

\begin{proposition}[Corrected anticommutator realization]\label{prop:toy}
Choose an oriented edge basis of $C^1(K)\simeq\mathbb R^m$, fix a skew-symmetric matrix $J\in\mathbb R^{m\times m}$, and let $D(a)=\operatorname{diag}(a_1,\ldots,a_m)$. Define
\begin{equation}
M(a):=D(a)J+JD(a),
\qquad
B(a,b):=PM(a)b
\qquad(a,b\in\mathcal V).
\label{eq:anticommutator}
\end{equation}
Then $B(a,b)\in\mathcal V$, $M(a)^\top=-M(a)$, and $B$ satisfies \eqref{eq:antisymmetry}. Moreover,
\begin{equation}
\|B(a,b)\|\leq2\|J\|\,\|a\|\,\|b\|.
\label{eq:bilinearbound}
\end{equation}
\end{proposition}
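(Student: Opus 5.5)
The plan is to verify the four claims in turn, using only linear algebra in the chosen edge basis. Two facts do most of the work: $P$ is an orthogonal projector onto $\mathcal V$, and $D(a)$ is diagonal, hence symmetric.

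\emph{Range.} Since $P$ maps $C^1(K)$ onto $\mathcal V$, we have $B(a,b)=PM(a)b\in\mathcal V$ at once. Bilinearity is also immediate: $a\mapsto D(a)$ is linear, so $a\mapsto M(a)$ is linear, and $M(a)b$ is linear in $b$.

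\emph{Skew-symmetry of $M(a)$.} From $D(a)^\top=D(a)$ and $J^\top=-J$ we get
\[
M(a)^\top=J^\top D(a)+D(a)J^\top=-JD(a)-D(a)J=-M(a).
\]
This is the point of the correction. A single product $D(a)J$ is not skew in general, because its transpose is $-JD(a)$. Only the anticommutator is skew.

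\emph{Trilinear antisymmetry.} For $a,b,c\in\mathcal V$, use $P^*=P$ and $Pc=c$ to get
\[
b(a,b,c)=\langle PM(a)b,c\rangle=\langle M(a)b,Pc\rangle=\langle M(a)b,c\rangle.
\]
Skew-symmetry of $M(a)$ then gives $\langle M(a)b,c\rangle=-\langle b,M(a)c\rangle=-\langle M(a)c,b\rangle$. Running the same projector step backwards with $Pb=b$ turns the last expression into $-b(a,c,b)$, which is \eqref{eq:antisymmetry}. The one thing to watch is that the projector is removed using membership of the \emph{third} argument in $\mathcal V$. That is exactly why the identity is stated only on $\mathcal V$.

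\emph{Bound \eqref{eq:bilinearbound}.} Since $\|P\|\le1$, we have $\|B(a,b)\|\le\|M(a)\|\,\|b\|$. Next, $\|M(a)\|\le\|D(a)J\|+\|JD(a)\|\le2\|D(a)\|\,\|J\|$. The operator norm of a diagonal matrix is $\max_i|a_i|$, and this is at most the Euclidean norm $\|a\|$. Combining these gives the constant $2\|J\|$.

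I expect no real obstacle. The step that needs care is keeping the basis conventions consistent: the Euclidean inner product on $C^1(K)$ must coincide with the standard inner product in the oriented edge basis, so that transposes are adjoints and $\|D(a)\|\le\|a\|$ holds. I would state that assumption explicitly at the start of the proof.
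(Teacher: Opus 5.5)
Your proposal is correct and follows the paper's proof step for step. You get the range from the leading projector $P$, skew-symmetry of $M(a)$ from $D(a)^\top=D(a)$ and $J^\top=-J$, trilinear antisymmetry by removing $P$ via $P^*=P$ and $Pc=c$, and the bound from $\|P\|=1$ and $\|D(a)\|\le\|a\|_\infty\le\|a\|$. Your explicit reinsertion of $P$ via $Pb=b$, and your requirement that the edge basis be orthonormal for the fixed Euclidean inner product, are minor clarifications the paper leaves implicit.
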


\begin{proof}
The range statement follows immediately from the leading projector $P$. Since $D(a)^\top=D(a)$ and $J^\top=-J$,
\[
M(a)^\top=J^\top D(a)^\top+D(a)^\top J^\top=-JD(a)-D(a)J=-M(a).
\]
For $c\in\mathcal V$, $Pc=c$ and self-adjointness of $P$ give
\[
b(a,b,c)=\langle PM(a)b,c\rangle
=\langle M(a)b,c\rangle
=-\langle b,M(a)c\rangle
=-b(a,c,b).
\]
Finally, $\|P\|=1$ and $\|D(a)\|\leq\|a\|_\infty\leq\|a\|$ yield \eqref{eq:bilinearbound}.
\end{proof}

\begin{remark}[Why the anticommutator is essential]
For symmetric $D(a)$ and skew-symmetric $J$, the ordinary commutator $D(a)J-JD(a)$ is symmetric. It therefore cannot yield \eqref{eq:antisymmetry} through the preceding argument. The anticommutator in \eqref{eq:anticommutator} is the only toy interaction used in the deterministic computations.
\end{remark}

\begin{remark}[Scope of the realization]
The matrix $J$ can couple nonadjacent edge coordinates and the projector $P$ is generally global. Thus Proposition~\ref{prop:toy} demonstrates an explicit admissible realization of the cancellation class; it does not derive a local cochain transport operator. Constructing local interactions from cup products, discrete Lie derivatives, or compatible MHD complexes remains a separate problem \cite{MullenEtAl2011,HiptmairPagliantini2018}.
\end{remark}

\subsection{Interaction class and exchange symmetries}

The energy argument uses only the range condition and \eqref{eq:antisymmetry}; it does not use a coordinate representation of $B$. This separation is important. Let
\[
\mathcal N(u,h):=\bigl(B(u,u)-B(h,h),\,B(u,h)-B(h,u)\bigr).
\]
Then the cancellation identity is equivalently
\begin{equation}
\langle\mathcal N(u,h),(u,h)\rangle_{\mathcal V\times\mathcal V}=0.
\label{eq:vectorcancellation}
\end{equation}
The nonlinear vector field is therefore energy-neutral, even though the individual field equations exchange energy through both self- and cross-couplings. This is the finite-dimensional analogue of the total-energy cancellation that motivates the MHD-type terminology.

The cancellation class is stable under two useful operations. If $B_1$ and $B_2$ satisfy \eqref{eq:antisymmetry}, then every linear combination $\alpha B_1+\gamma B_2$ does as well. Further, let $R$ be an orthogonal projector on $\mathcal V$ and let $\widetilde B$ be admissible. On the restricted phase space $R\mathcal V$, define $B_R(a,b):=R\widetilde B(a,b)$ for $a,b\in R\mathcal V$. For $c\in R\mathcal V$, orthogonality gives $\langle B_R(a,b),c\rangle=\langle\widetilde B(a,b),c\rangle=-\langle\widetilde B(a,c),b\rangle=-\langle B_R(a,c),b\rangle$. Thus $B_R$ satisfies the same trilinear antisymmetry on $R\mathcal V$. The harmonic-decoupled construction in Section~\ref{sec:topology} is a further projected construction that also removes harmonic dependence. These elementary closure properties show that the cancellation assumption specifies an interaction class rather than a single engineered matrix formula.

For the anticommutator realization, bilinearity also gives an explicit local Lipschitz estimate. If $a,b,c,d\in\mathcal V$, then
\[
B(a,b)-B(c,d)=B(a-c,b)+B(c,b-d),
\]
and therefore
\begin{equation}
\|B(a,b)-B(c,d)\|
\leq2\|J\|\bigl(\|a-c\|\,\|b\|+\|c\|\,\|b-d\|\bigr).
\label{eq:localLipschitz}
\end{equation}
The estimate makes the continuous-semiflow formulation below quantitative on every bounded set. It is not, by itself, a derivative-of-semiflow estimate sufficient for a nontrivial attractor-dimension bound.

\subsection{Global solvability and a continuous semiflow}

For the realization in Proposition~\ref{prop:toy}, the right-hand side of \eqref{eq:u}--\eqref{eq:h} is polynomial and locally Lipschitz. The cancellation identity prevents finite-time blow-up even when the complex is not coercive.

\begin{lemma}[Global existence]\label{lem:global}
For every initial datum in $\mathcal V\times\mathcal V$ and fixed $f,g$, system \eqref{eq:u}--\eqref{eq:h} has a unique global solution.
\end{lemma}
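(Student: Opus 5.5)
The plan is to combine local well-posedness from the Picard--Lindel\"of theorem with an a priori energy bound that excludes finite-time blow-up. The bound must not rely on coercivity, since the lemma is claimed even when $\mathcal H^1\neq\{0\}$.

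\textbf{Local existence and uniqueness.} The phase space $\mathcal V\times\mathcal V$ is a finite-dimensional Euclidean space. The vector field
\[
F(u,h)=\bigl(-\nu\Delta_1u+B(u,u)-B(h,h)+Pf,\;-\eta\Delta_1h+B(u,h)-B(h,u)+Pg\bigr)
\]
maps it into itself, by Lemma~\ref{lem:invariance}, the range condition on $B$, and $Pf,Pg\in\mathcal V$. For a general continuous bilinear $B$ on a finite-dimensional space there is a constant $C_B$ with $\|B(a,b)\|\le C_B\|a\|\,\|b\|$; for the anticommutator realization this is \eqref{eq:bilinearbound}. The difference identity used in \eqref{eq:localLipschitz} then shows that $F$ is Lipschitz on every bounded set. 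Picard--Lindel\"of therefore gives a unique maximal solution on $[0,T^*)$, and if $T^*<\infty$ then $\|u(t)\|+\|h(t)\|\to\infty$ as $t\uparrow T^*$.

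\textbf{Energy estimate.} Take the inner product of \eqref{eq:u} with $u$ and of \eqref{eq:h} with $h$, and add. By \eqref{eq:vectorcancellation}, which follows from \eqref{eq:antisymmetry}, the nonlinear terms vanish. The relevant terms are $b(u,u,u)=0$, $b(u,h,h)=0$, and $-b(h,h,u)-b(h,u,h)=0$. Using \eqref{eq:hodgeenergy} for the dissipative terms gives
\[
\tfrac12\tfrac{d}{dt}\bigl(\|u\|^2+\|h\|^2\bigr)+\nu\|d_1u\|^2+\eta\|d_1h\|^2=\langle Pf,u\rangle+\langle Pg,h\rangle.
\]
Discard the nonnegative dissipation terms and apply Cauchy--Schwarz together with $ab\le\tfrac12(a^2+b^2)$. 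With $E=\|u\|^2+\|h\|^2$ this yields
\[
\dot E\le E+\|Pf\|^2+\|Pg\|^2 .
\]
Gronwall's inequality then gives $E(t)\le e^{t}\bigl(E(0)+\|Pf\|^2+\|Pg\|^2\bigr)$. This bound is finite on every finite interval, which contradicts blow-up, so $T^*=\infty$.

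\textbf{Main point.} The only delicate step is checking that the cross terms cancel exactly with the signs in \eqref{eq:u}--\eqref{eq:h}. Once that is verified, no Poincar\'e inequality is needed. This is why the argument is independent of Theorem~\ref{thm:coercivity}: the possibly exponential growth bound is harmless for global existence, and coercivity will matter only later, for the absorbing ball.
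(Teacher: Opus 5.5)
Your proposal is correct and follows essentially the same route as the paper: the exact energy identity with the nonlinear terms cancelled by \eqref{eq:antisymmetry}, the nonnegative dissipation dropped, and finite-dimensional continuation. The only difference is cosmetic. You absorb the forcing with Young's inequality and Gronwall to get $E(t)\le e^{t}(E(0)+\|Pf\|^2+\|Pg\|^2)$, whereas the paper bounds $E'\le 2F\sqrt{E}$ to obtain the linear bound $\sqrt{E(t)}\le\sqrt{E(0)}+Ft$; either bound rules out blow-up, and your version also makes the local Picard--Lindel\"of step explicit.
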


\begin{proof}
The energy calculation below gives
\[
\frac12E'(t)+\nu\|\Delta_1^{1/2}u\|^2+\eta\|\Delta_1^{1/2}h\|^2
=\langle Pf,u\rangle+\langle Pg,h\rangle,
\]
where $E=\|u\|^2+\|h\|^2$. Dropping nonnegative dissipation gives $E'(t)\leq2F\sqrt{E(t)}$, where $F^2=\|Pf\|^2+\|Pg\|^2$. Hence $\sqrt{E(t)}\leq\sqrt{E(0)}+Ft$ on every existence interval, and finite-dimensional continuation completes the proof.
\end{proof}

The solution map $S(t)$ is therefore a continuous semiflow on $\mathcal V\times\mathcal V$. This elementary observation is used below only to apply the standard absorbing-set construction; no nontrivial attractor-dimension estimate is claimed.

\subsection{Continuous dependence on bounded sets}

For the explicit realization, continuous dependence can be quantified. Let $(u,h)$ and $(\bar u,\bar h)$ be two solutions with the same forcing, and assume that all four component norms remain bounded by $R$ on $[0,T]$. Bilinearity and \eqref{eq:bilinearbound} imply
\[
\|B(u,u)-B(\bar u,\bar u)\|
\leq4\|J\|R\|u-\bar u\|,
\]
and the same estimate holds for each mixed difference after replacing one factor by its difference. Consequently, if
\[
\mathcal E(t):=\|u(t)-\bar u(t)\|^2+\|h(t)-\bar h(t)\|^2,
\]
there is a constant $C_J>0$, depending only on $J$, such that
\begin{equation}
\mathcal E'(t)\leq C_JR\,\mathcal E(t)
\qquad(0\leq t\leq T).
\label{eq:continuousdependence}
\end{equation}
A direct Euclidean-norm estimate may take $C_J=16\|J\|$. The dissipative terms have simply been dropped in this upper bound. Gr\"onwall's inequality yields
\[
\mathcal E(t)\leq e^{C_JRt}\mathcal E(0).
\]
Although \eqref{eq:continuousdependence} is not a contraction estimate, it makes explicit why the polynomial system defines a continuous semiflow and why the attractor construction can be based on a compact absorbing ball. Obtaining a useful volume-contraction estimate would require sharper information about the derivative of the nonlinear flow.

\section{Energy identity and coercive long-time dynamics}

Set
\[
E(t):=\|u(t)\|^2+\|h(t)\|^2,
\qquad
F^2:=\|Pf\|^2+\|Pg\|^2.
\]

\begin{theorem}[Exact energy identity]\label{thm:energy}
Every solution satisfying \eqref{eq:antisymmetry} obeys
\begin{equation}
\frac12\frac{dE}{dt}
+\nu\|\Delta_1^{1/2}u\|^2
+\eta\|\Delta_1^{1/2}h\|^2
=\langle Pf,u\rangle+\langle Pg,h\rangle.
\label{eq:energyidentity}
\end{equation}
\end{theorem}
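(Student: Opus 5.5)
The plan is to differentiate $E$ along a solution and pair each equation with its own unknown in the Euclidean inner product on $\mathcal V$. Since $u,h$ are differentiable in the finite-dimensional space $\mathcal V$ (Lemma~\ref{lem:global}), we have
\[
\frac12\frac{dE}{dt}=\langle\dot u,u\rangle+\langle\dot h,h\rangle .
\]
We then substitute \eqref{eq:u}--\eqref{eq:h} and sort the resulting terms into three groups: dissipative, nonlinear, and forcing.

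\textbf{Dissipative terms.} These are $-\nu\langle\Delta_1u,u\rangle-\eta\langle\Delta_1h,h\rangle$. Because $\Delta_1$ is self-adjoint and nonnegative, it has a square root, and
\[
\langle\Delta_1 v,v\rangle=\|\Delta_1^{1/2}v\|^2 .
\]
On $\mathcal V$ this also equals $\|d_1v\|^2$ by \eqref{eq:hodgeenergy}. Moving these terms to the left gives the two dissipation terms in \eqref{eq:energyidentity}.

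\textbf{Forcing terms.} These are already $\langle Pf,u\rangle+\langle Pg,h\rangle$, so no manipulation is needed.

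\textbf{Nonlinear terms.} This is the only step with content, though it is short. The contribution is
\[
b(u,u,u)-b(h,h,u)+b(u,h,h)-b(h,u,h).
\]
By \eqref{eq:antisymmetry} with $b=c$, we get $b(u,u,u)=0$ and $b(u,h,h)=0$. The remaining cross terms also cancel: \eqref{eq:antisymmetry} applied with first argument $h$ gives $b(h,u,h)=-b(h,h,u)$, so
\[
-b(h,h,u)-b(h,u,h)=-b(h,h,u)+b(h,h,u)=0 .
\]
Equivalently, this is the vector cancellation \eqref{eq:vectorcancellation}.

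The main point needing care is that the antisymmetry \eqref{eq:antisymmetry} is only available for arguments in $\mathcal V$. We therefore need $u(t),h(t)\in\mathcal V$ for all $t$, which is the invariance of $\mathcal V\times\mathcal V$ noted after \eqref{eq:h}. That invariance follows from Lemma~\ref{lem:invariance}, the range condition on $B$, and $Pf,Pg\in\mathcal V$. Combining the three groups yields \eqref{eq:energyidentity}.
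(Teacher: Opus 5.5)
Your proposal is correct and matches the paper's proof. You pair \eqref{eq:u}--\eqref{eq:h} with $u$ and $h$, kill $b(u,u,u)$ and $b(u,h,h)$ via $b(a,b,b)=0$, and cancel $-b(h,h,u)-b(h,u,h)$ by \eqref{eq:antisymmetry}. One small point: you should not cite Lemma~\ref{lem:global} for differentiability, because that lemma's proof itself uses this energy identity, which makes the citation circular; any local solution of the ODE is already $C^1$ on its existence interval, and that is all you need.
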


\begin{proof}
Take inner products of \eqref{eq:u} and \eqref{eq:h} with $u$ and $h$, respectively. The nonlinear contribution is
\[
b(u,u,u)-b(h,h,u)+b(u,h,h)-b(h,u,h).
\]
The first and third terms vanish because $b(a,b,b)=0$, and the remaining two cancel by \eqref{eq:antisymmetry}.
\end{proof}

Assume first that $\mathcal H^1=\{0\}$ and define
\[
\beta:=\min(\nu,\eta)\lambda_*>0.
\]
Then the dissipative terms in \eqref{eq:energyidentity} are bounded below by $\beta E$. Young's inequality yields
\[
2\langle Pf,u\rangle+2\langle Pg,h\rangle
\leq\beta E+\frac{F^2}{\beta}.
\]

\begin{proposition}[Forced scalar inequality]\label{prop:scalar}
On a coercive phase space,
\begin{equation}
E'(t)\leq-\beta E(t)+\frac{F^2}{\beta},
\label{eq:scalarineq}
\end{equation}
and consequently
\begin{equation}
E(t)\leq E(0)e^{-\beta t}+\frac{F^2}{\beta^2}(1-e^{-\beta t}).
\label{eq:forcedbound}
\end{equation}
\end{proposition}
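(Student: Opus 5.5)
Starting from the exact energy identity \eqref{eq:energyidentity} of Theorem~\ref{thm:energy}, I multiply by $2$ to get
\[
E'(t)+2\nu\|\Delta_1^{1/2}u\|^2+2\eta\|\Delta_1^{1/2}h\|^2=2\langle Pf,u\rangle+2\langle Pg,h\rangle .
\]
Since $\mathcal H^1=\{0\}$, the coercivity estimate \eqref{eq:coercivity} from Theorem~\ref{thm:coercivity} holds. Combined with $\|\Delta_1^{1/2}v\|^2=\langle\Delta_1 v,v\rangle$, it bounds the dissipation from below:
\[
2\nu\|\Delta_1^{1/2}u\|^2+2\eta\|\Delta_1^{1/2}h\|^2\geq 2\min(\nu,\eta)\lambda_*(\|u\|^2+\|h\|^2)=2\beta E .
\]

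For the forcing, I apply Cauchy--Schwarz on $\mathcal V\times\mathcal V$ and then Young's inequality with weight $\beta$:
\[
2\langle Pf,u\rangle+2\langle Pg,h\rangle\leq 2F\sqrt E\leq \beta E+F^2/\beta .
\]
Substituting both bounds gives $E'+2\beta E\leq \beta E+F^2/\beta$, which is \eqref{eq:scalarineq}. The extra factor $2$ in the dissipation is exactly what absorbs the $\beta E$ produced by Young's inequality, so no other constant is needed.

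For \eqref{eq:forcedbound}, I multiply \eqref{eq:scalarineq} by the integrating factor $e^{\beta t}$ to obtain
\[
\frac{d}{dt}\bigl(e^{\beta t}E\bigr)\leq \frac{F^2}{\beta}e^{\beta t}.
\]
Integrating over $[0,t]$ gives $e^{\beta t}E(t)-E(0)\leq F^2(e^{\beta t}-1)/\beta^2$, and multiplying by $e^{-\beta t}$ yields the claimed bound. This step is legitimate for all $t\geq0$ because Lemma~\ref{lem:global} provides global $C^1$ solutions, so $E$ is differentiable. No step is a genuine obstacle. The only care needed is the constant bookkeeping, so that the Young weight is $\beta$ (not $\beta/2$), which produces the stated constant $F^2/\beta^2$.
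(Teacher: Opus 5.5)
Your proof is correct and follows essentially the same argument as the paper. Coercivity bounds the doubled dissipation below by $2\beta E$, Young's inequality with weight $\beta$ absorbs the forcing at the cost of one copy of $\beta E$ (the ``sacrificed factor of two'' the paper remarks on), and the integrating factor $e^{\beta t}$ then yields \eqref{eq:forcedbound}.
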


The rate in Proposition~\ref{prop:scalar} intentionally sacrifices a factor of two: Young's inequality consumes one of the two coercive contributions in order to absorb the forcing. When $f=g=0$, no Young estimate is used and the exact energy identity retains the sharper exponent $2\beta$.

\subsection{Forced balance and time-averaged dissipation}

The scalar estimate supplies more than boundedness. Integrating the exact identity gives, for every $T>0$,
\begin{align}
2\int_0^T\bigl(\nu\|\Delta_1^{1/2}u\|^2+\eta\|\Delta_1^{1/2}h\|^2\bigr)\,dt
&=E(0)-E(T)\\
&\quad+2\int_0^T\bigl(\langle Pf,u\rangle+\langle Pg,h\rangle\bigr)\,dt.
\label{eq:integratedbalance}
\end{align}
Thus a bounded forced trajectory has bounded time-averaged dissipation. More specifically, Proposition~\ref{prop:scalar} and Cauchy--Schwarz show that the mean dissipation is controlled entirely by the projected forcing magnitude and the absorbing radius. This observation is elementary in finite dimensions, but it explains why the numerical energy diagnostic is meaningful: the pointwise cancellation law is exact, while numerical quadrature only approximates the integral in \eqref{eq:integratedbalance}.

The forcing is deliberately time independent here because it generates an autonomous semiflow. Time-dependent forcing would lead to a nonautonomous process and requires a pullback- or uniform-attractor framework, which is outside the current scope. Likewise, a quantitative fractal-dimension estimate would require control of the linearized flow on the absorbing set. Neither follows from energy cancellation alone.

\begin{theorem}[Positively invariant absorbing ball and compact attractor]\label{thm:attractor}
Assume $\mathcal H^1=\{0\}$. For every $R>F/\beta$, the ball
\[
\mathcal B_R:=\{(u,h)\in\mathcal V\times\mathcal V:E\leq R^2\}
\]
is positively invariant and absorbing. The semiflow has the compact global attractor
\[
\mathcal A=\omega(\mathcal B_R):=\bigcap_{\tau\geq0}\overline{\bigcup_{t\geq\tau}S(t)\mathcal B_R}.
\]
\end{theorem}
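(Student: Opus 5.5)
The argument rests on the forced scalar inequality of Proposition~\ref{prop:scalar}, the continuity of the semiflow from Lemma~\ref{lem:global} and the subsequent continuous-dependence estimate, and the standard finite-dimensional construction of an attractor from a compact absorbing set.

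\emph{Positive invariance.} Fix $R>F/\beta$, so that $F^2/\beta^2<R^2$. Take initial data in $\mathcal B_R$, i.e.\ $E(0)\le R^2$. The bound \eqref{eq:forcedbound} writes $E(t)$ as a convex combination of $E(0)$ and $F^2/\beta^2$, with weights $e^{-\beta t}$ and $1-e^{-\beta t}$. Both quantities are at most $R^2$, so $E(t)\le R^2$ for all $t\ge0$.

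\emph{Absorption.} Let $\mathcal B$ be a bounded set, say $E(0)\le\rho^2$ on $\mathcal B$. Then \eqref{eq:forcedbound} gives
\[
E(t)\le\rho^2e^{-\beta t}+F^2/\beta^2 .
\]
Since $R^2-F^2/\beta^2>0$, there is an explicit entrance time
\[
t_0(\rho)=\beta^{-1}\log\bigl(\rho^2/(R^2-F^2/\beta^2)\bigr)
\]
(or $t_0=0$ if $\rho\le R$), after which $S(t)\mathcal B\subset\mathcal B_R$ for all $t\ge t_0$. The entrance time depends only on $\rho$, so absorption is uniform over each bounded set.

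\emph{Attractor.} $\mathcal V\times\mathcal V$ is finite-dimensional, so $\mathcal B_R$ is compact. By positive invariance, the sets $K_\tau:=\overline{\bigcup_{t\ge\tau}S(t)\mathcal B_R}$ form a decreasing family of nonempty compact subsets of $\mathcal B_R$. Hence $\mathcal A=\bigcap_\tau K_\tau$ is nonempty and compact. I would then verify the standard properties in the usual order.

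\begin{enumerate}
\item \emph{Invariance, $S(t)\mathcal A=\mathcal A$.} Characterize $\mathcal A$ by sequences: $y\in\mathcal A$ iff $y=\lim S(t_n)x_n$ with $x_n\in\mathcal B_R$ and $t_n\to\infty$. For $S(t)\mathcal A\subset\mathcal A$, use continuity of $S(t)$. For the reverse inclusion, pass to a convergent subsequence of $S(t_n-t)x_n$, which lies in the compact set $\mathcal B_R$ once $t_n\ge t$.
\item \emph{Attraction of $\mathcal B_R$.} Argue by contradiction. If $\operatorname{dist}(S(t_n)x_n,\mathcal A)\ge\varepsilon$ along some $t_n\to\infty$, then compactness of $\mathcal B_R$ produces a limit point. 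That limit point lies in $\mathcal A$ by the sequential characterization, which is a contradiction.
\item \emph{Attraction of arbitrary bounded sets.} Every bounded set enters $\mathcal B_R$ after the time $t_0$ above, so it inherits the attraction from the previous step.
\item \emph{Minimality.} Any compact invariant set that attracts bounded sets must coincide with $\mathcal A$.
\end{enumerate}

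The continuity of $S(t)$ on bounded sets used in the invariance step is exactly what \eqref{eq:continuousdependence} supplies. For a general admissible $B$, the same conclusion follows from local Lipschitz continuity of the polynomial vector field.

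The one step needing genuine care is the reverse inclusion $\mathcal A\subset S(t)\mathcal A$. There I would use the sequential characterization explicitly rather than manipulating set closures. Everything else is a direct consequence of \eqref{eq:forcedbound} and finite dimensionality.
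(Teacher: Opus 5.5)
Your proposal is correct and follows the paper's route: \eqref{eq:forcedbound} gives absorption, and compactness of $\mathcal B_R$ in finite dimensions makes $\omega(\mathcal B_R)$ the global attractor. You write out the standard omega-limit argument that the paper delegates to Temam and Constantin--Foias, and you obtain positive invariance from the convex-combination form of \eqref{eq:forcedbound} rather than from the sign of $E'$ on $E=R^2$ via \eqref{eq:scalarineq}; this is slightly more direct and shows that invariance alone needs only $R\ge F/\beta$, with strictness used only for absorption.
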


\begin{proof}
On $E=R^2$, \eqref{eq:scalarineq} gives
\[
E'(t)\leq-\beta R^2+\frac{F^2}{\beta}<0,
\]
which proves positive invariance. Formula \eqref{eq:forcedbound} proves absorption. Lemma~\ref{lem:global} and local Lipschitz continuity give a continuous semiflow, while the closed bounded ball is compact in finite dimensions. The displayed closed omega-limit set is therefore nonempty, compact, invariant, and attracting \cite{Temam1997,ConstantinFoias1988}.
\end{proof}

\begin{theorem}[Unforced coercive decay]\label{thm:unforced}
If $f=g=0$ and $\mathcal H^1=\{0\}$, then
\[
E(t)\leq E(0)e^{-2\beta t}.
\]
Thus the origin is the global attractor.
\end{theorem}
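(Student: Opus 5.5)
The plan is to work directly from the exact energy identity of Theorem~\ref{thm:energy} with $f=g=0$, so that no Young inequality is needed. The right-hand side of \eqref{eq:energyidentity} then vanishes, and
\[
\frac12E'(t)=-\nu\|\Delta_1^{1/2}u\|^2-\eta\|\Delta_1^{1/2}h\|^2 .
\]
Since $\mathcal H^1=\{0\}$, Theorem~\ref{thm:coercivity} supplies $\lambda_*>0$ with $\langle\Delta_1v,v\rangle\geq\lambda_*\|v\|^2$ on $\mathcal V$. Here $\|\Delta_1^{1/2}v\|^2=\langle\Delta_1v,v\rangle$, because $\Delta_1$ is self-adjoint and nonnegative. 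Both $u(t)$ and $h(t)$ stay in $\mathcal V$ by the invariance noted after \eqref{eq:h}, so the coercivity estimate applies along the trajectory. This gives
\[
\nu\|\Delta_1^{1/2}u\|^2+\eta\|\Delta_1^{1/2}h\|^2\geq\min(\nu,\eta)\lambda_*\bigl(\|u\|^2+\|h\|^2\bigr)=\beta E .
\]
Hence $E'(t)\leq-2\beta E(t)$ for all $t\geq0$; global existence of the solution is guaranteed by Lemma~\ref{lem:global}.

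Next I integrate this differential inequality. Since $\frac{d}{dt}\bigl(e^{2\beta t}E(t)\bigr)\leq0$ (Gr\"onwall), we get $E(t)\leq E(0)e^{-2\beta t}$. This retains the full exponent $2\beta$, consistent with the remark following Proposition~\ref{prop:scalar}.

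For the attractor statement, I first note that with zero forcing the origin is an equilibrium, because $B$ is bilinear and so $B(0,0)=0$. The set $\{0\}$ is therefore compact and invariant. The decay estimate is uniform on bounded sets: for any bounded $X\subset\mathcal V\times\mathcal V$ with $E\leq r^2$ on $X$, we have $\sup_{x\in X}E(S(t)x)\leq r^2e^{-2\beta t}\to0$, so $\{0\}$ attracts every bounded set. It remains to show that $\{0\}$ is the global attractor. By Theorem~\ref{thm:attractor}, the compact global attractor is contained in every absorbing ball $\mathcal B_R$ with $R>F/\beta=0$, that is, in every ball of positive radius. It must also contain the invariant set $\{0\}$, so $\mathcal A=\{0\}$. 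Alternatively, minimality of the attractor among compact attracting sets gives the same conclusion directly.

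There is no real obstacle here. The one point requiring care is to avoid the Young-inequality route of Proposition~\ref{prop:scalar}, which would only give the rate $\beta$, and to use the identity directly to obtain $2\beta$.
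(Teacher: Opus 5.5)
Your proof is correct and follows the paper's argument. The paper's proof is the one-line observation that, with zero forcing, the energy identity \eqref{eq:energyidentity} together with coercivity gives $E'(t)\leq-2\beta E(t)$. Your identification of the attractor as $\{0\}$ fills in what the paper leaves implicit: the origin is an invariant equilibrium, it attracts bounded sets uniformly, and $\mathcal A=\omega(\mathcal B_R)\subset\mathcal B_R$ for every $R>0$.
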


\begin{proof}
With zero forcing, \eqref{eq:energyidentity} and coercivity yield $E'(t)\leq-2\beta E(t)$.
\end{proof}

\begin{remark}[Modesty of the attractor conclusion]
In finite dimensions, global existence plus a compact absorbing set implies a compact global attractor. Theorem~\ref{thm:attractor} is recorded for completeness, but the nonroutine issue is the topological hypothesis needed to obtain the absorbing estimate on the full divergence-free phase space.
\end{remark}

\section{Harmonic affine fibres in the noncoercive case}\label{sec:topology}

When $\mathcal H^1\neq\{0\}$, applying $Q$ to \eqref{eq:u}--\eqref{eq:h} gives
\begin{align}
\frac{d}{dt}Qu&=Q\bigl\{B(u,u)-B(h,h)\bigr\}+QPf,\label{eq:harmonicu}\\
\frac{d}{dt}Qh&=Q\bigl\{B(u,h)-B(h,u)\bigr\}+QPg.\label{eq:harmonich}
\end{align}
Harmonic modes are undamped by $\Delta_1$, but they need not be conserved under an arbitrary admissible interaction. Equations \eqref{eq:harmonicu}--\eqref{eq:harmonich} rule out any universal persistence statement based only on nontrivial cohomology.

A structured subclass yields a positive alternative. Let $\widetilde B$ be any bilinear interaction satisfying \eqref{eq:antisymmetry}, and define its harmonic-decoupled version by
\begin{equation}
B_0(a,b):=P_0\widetilde B(P_0a,P_0b).
\label{eq:decoupledB}
\end{equation}
The construction used in the deterministic two-hole experiment is obtained by taking $\widetilde B$ from Proposition~\ref{prop:toy}; explicitly,
\[
B_0(a,b)=P_0\bigl(D(P_0a)J+JD(P_0a)\bigr)P_0b.
\]

\begin{lemma}[Decoupled cancellation]\label{lem:decoupled}
The map $B_0$ satisfies \eqref{eq:antisymmetry} and $QB_0(a,b)=0$ for all $a,b\in\mathcal V$.
\end{lemma}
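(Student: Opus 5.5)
The plan is to use two facts about $P_0$ and one about $\widetilde B$. First, $P_0=I-Q$ is the orthogonal projector of $\mathcal V$ onto $\mathcal V_0$, so it is self-adjoint on $\mathcal V$ and satisfies $QP_0=0$. Second, $\widetilde B$ satisfies \eqref{eq:antisymmetry} for all arguments in $\mathcal V$. The range statement comes first and is immediate. For $a,b\in\mathcal V$ the vector $\widetilde B(P_0a,P_0b)$ lies in $\mathcal V$, because $P_0a,P_0b\in\mathcal V$ and $\widetilde B$ maps into $\mathcal V$. Applying $P_0$ then puts $B_0(a,b)$ in $\mathcal V_0=\mathcal V\cap(\mathcal H^1)^\perp$. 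Hence $QB_0(a,b)=QP_0\widetilde B(P_0a,P_0b)=0$, and $B_0$ is again a continuous bilinear map $\mathcal V\times\mathcal V\to\mathcal V$.

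For the antisymmetry I would fix $a,b,c\in\mathcal V$ and move the leading $P_0$ across the inner product using self-adjointness on $\mathcal V$:
\[
\langle B_0(a,b),c\rangle=\langle \widetilde B(P_0a,P_0b),P_0c\rangle .
\]
Since $P_0a$, $P_0b$ and $P_0c$ all lie in $\mathcal V$, I can apply \eqref{eq:antisymmetry} for $\widetilde B$ with those three arguments. This gives $-\langle \widetilde B(P_0a,P_0c),P_0b\rangle$. Reversing the first step turns this into $-\langle P_0\widetilde B(P_0a,P_0c),b\rangle=-\langle B_0(a,c),b\rangle$, which is exactly \eqref{eq:antisymmetry} for $B_0$. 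The argument mirrors the closure computation for $B_R$ in the preceding subsection. The difference is that the projector now acts on all three arguments, so $a,b,c$ may range over the whole of $\mathcal V$ rather than over the range of the projector alone.

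The only point requiring care is justifying that $P_0$ is self-adjoint on $\mathcal V$ with respect to the ambient Euclidean inner product. This holds because $Q$ is defined as the orthogonal projector of $\mathcal V$ onto $\mathcal H^1$. It also relies on the orthogonal decomposition \eqref{eq:hodgedecomp}, $\mathcal V=\mathcal H^1\oplus\mathcal V_0$, which is guaranteed by the self-adjointness of $\Delta_1|_{\mathcal V}$ from Lemma~\ref{lem:invariance}. Beyond that, the proof is a routine check.

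For the explicit realization, the same argument applies verbatim with $\widetilde B$ taken from Proposition~\ref{prop:toy}. That proposition already supplies the antisymmetry of $\widetilde B$ on $\mathcal V$, so nothing further needs to be checked.
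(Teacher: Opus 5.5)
Your proof is correct and matches the paper's argument: $QB_0=0$ follows from $QP_0=0$, and antisymmetry follows by moving the self-adjoint $P_0$ onto $c$, applying \eqref{eq:antisymmetry} for $\widetilde B$ to $P_0a,P_0b,P_0c$, and moving $P_0$ back. One aside: your appeal to Lemma~\ref{lem:invariance} for the splitting $\mathcal V=\mathcal H^1\oplus\mathcal V_0$ is unnecessary, since that orthogonal decomposition holds for any subspace of $\mathcal V$, but it does no harm.
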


\begin{proof}
The second assertion follows from $QP_0=0$. For the first, orthogonality of $P_0$ and the antisymmetry of $\widetilde B$ give
\[
\langle B_0(a,b),c\rangle
=\langle\widetilde B(P_0a,P_0b),P_0c\rangle
=-\langle\widetilde B(P_0a,P_0c),P_0b\rangle
=-\langle B_0(a,c),b\rangle.
\]
\end{proof}

\begin{theorem}[Invariant harmonic fibres and fibre-wise attractors]\label{thm:fibre}
Consider \eqref{eq:u}--\eqref{eq:h} with $B=B_0$ from \eqref{eq:decoupledB} and assume $QPf=QPg=0$. For every $(u_H,h_H)\in\mathcal H^1\times\mathcal H^1$, the affine fibre
\[
\mathcal X_{u_H,h_H}:=\{(u,h)\in\mathcal V\times\mathcal V:Qu=u_H,\ Qh=h_H\}
\]
is positively invariant. On this fibre, the complementary variables $(P_0u,P_0h)$ satisfy the energy identity \eqref{eq:energyidentity} with $\Delta_1|_{\mathcal V_0}$ and forcings $P_0Pf,P_0Pg\in\mathcal V_0$; under the displayed assumption these equal $Pf,Pg$. Hence the fibre system has a compact global attractor. In the unforced case,
\[
\|P_0u(t)\|^2+\|P_0h(t)\|^2
\leq
\bigl(\|P_0u(0)\|^2+\|P_0h(0)\|^2\bigr)e^{-2\beta_+t},
\]
where $\beta_+:=\min(\nu,\eta)\lambda_+$. Thus every unforced trajectory on $\mathcal X_{u_H,h_H}$ converges to $(u_H,h_H)$.
\end{theorem}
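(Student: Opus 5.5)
The plan is to split the dynamics along the orthogonal decomposition $\mathcal V=\mathcal H^1\oplus\mathcal V_0$ and use Lemma~\ref{lem:decoupled} together with the commutation in Lemma~\ref{lem:invariance}. First apply $Q$ to \eqref{eq:u}--\eqref{eq:h} with $B=B_0$. Three facts kill every term on the right-hand side. Since $\Delta_1$ is self-adjoint, preserves $\mathcal V$, and annihilates $\mathcal H^1$, we have $Q\Delta_1 v=0$ for $v\in\mathcal V$. Lemma~\ref{lem:decoupled} gives $QB_0\equiv0$. The hypothesis gives $QPf=QPg=0$. Hence $\frac{d}{dt}Qu=\frac{d}{dt}Qh=0$ along every solution, and global existence comes from Lemma~\ref{lem:global}, which applies because $B_0$ satisfies \eqref{eq:antisymmetry} and is polynomial. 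This shows that $Qu(t)=u_H$ and $Qh(t)=h_H$ for all $t\ge0$, so each fibre $\mathcal X_{u_H,h_H}$ is positively invariant.

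Next, write $u=u_H+w$ and $h=h_H+z$ with $w,z\in\mathcal V_0$. Because $B_0(a,b)$ depends only on $P_0a$ and $P_0b$, the harmonic parts drop out of the nonlinearity, and $\Delta_1 u=\Delta_1 w$. Applying $P_0$ gives the closed system
\[
\dot w=-\nu\Delta_1 w+B_0(w,w)-B_0(z,z)+P_0Pf,\qquad \dot z=-\eta\Delta_1 z+B_0(w,z)-B_0(z,w)+P_0Pg
\]
on $\mathcal V_0\times\mathcal V_0$. Here $\Delta_1$ maps $\mathcal V_0$ into itself, since it is self-adjoint on $\mathcal V$ and preserves $\mathcal H^1$. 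By the hypothesis, $P_0Pf=Pf-QPf=Pf$, and likewise $P_0Pg=Pg$. The restriction of $B_0$ to $\mathcal V_0$ still satisfies \eqref{eq:antisymmetry}, by Lemma~\ref{lem:decoupled}. The proof of Theorem~\ref{thm:energy} therefore applies verbatim and gives the energy identity for $E_0:=\|w\|^2+\|z\|^2$, with $\Delta_1|_{\mathcal V_0}$ as the dissipative operator.

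Finally, Theorem~\ref{thm:coercivity} provides coercivity on $\mathcal V_0$: $\langle\Delta_1 v,v\rangle\ge\lambda_+\|v\|^2$. We then repeat Proposition~\ref{prop:scalar} and Theorem~\ref{thm:attractor} with $\beta_+$ in place of $\beta$ and with $\mathcal V_0\times\mathcal V_0$ as the finite-dimensional phase space. This yields a positively invariant absorbing ball and the compact attractor $\omega(\mathcal B_R)$ for $R>F/\beta_+$. Translating this attractor by $(u_H,h_H)$ gives the fibre-wise attractor. In the unforced case, the identity together with coercivity gives $E_0'\le-2\beta_+E_0$, which is the stated exponential bound. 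Since $Q$-components are constant, this means $(u,h)\to(u_H,h_H)$.

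I expect the only real care point to be the step showing that the $P_0$-system is closed and autonomous on $\mathcal V_0$. This requires checking three things explicitly: $P_0$ commutes with $\Delta_1$ on $\mathcal V$; $B_0(u,\cdot)$ does not depend on $u_H,h_H$; and $P_0B_0=B_0$. Once this is done, the rest is a direct reuse of the coercive-case arguments.
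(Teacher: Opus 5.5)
Your proposal is correct and follows essentially the same route as the paper. You annihilate the $Q$-projected equations via $QB_0=0$ and $QPf=QPg=0$, and you also make explicit that $Q\Delta_1v=0$ on $\mathcal V$, which the paper's projected equations use implicitly. You then reduce to a closed antisymmetric system on $\mathcal V_0\times\mathcal V_0$ and rerun the energy identity, coercivity with $\lambda_+$, and the absorbing-ball/omega-limit argument (with Young's inequality omitted in the unforced case), exactly as the paper's brief proof does.
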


\begin{proof}
Lemma~\ref{lem:decoupled} and $QPf=QPg=0$ make the right-hand sides of \eqref{eq:harmonicu}--\eqref{eq:harmonich} zero. Hence the fibre is invariant. Write $u=u_H+u_0$ and $h=h_H+h_0$, with $u_0,h_0\in\mathcal V_0$. Since $B_0$ depends only on $P_0a$, the complementary variables solve a closed system with interaction $B_0(u_0,\cdot)$ and forcings $P_0Pf,P_0Pg$. The proof of Theorem~\ref{thm:energy} applies to $E_0=\|u_0\|^2+\|h_0\|^2$, and Theorem~\ref{thm:coercivity} supplies the constant $\lambda_+$ on $\mathcal V_0$. The forced absorbing-ball and omega-limit argument then applies relative to the fixed fibre. The unforced estimate follows without Young's inequality.
\end{proof}

Theorem~\ref{thm:fibre} does not assert that the original interaction in Proposition~\ref{prop:toy} has invariant harmonic fibres. Instead, it characterizes a concrete sufficient structure. This distinction is essential: topology obstructs full-space coercivity for the general cancellation class, while harmonic decoupling restores a well-posed dissipative theory on each prescribed harmonic fibre.

\subsection{What the fibre theorem does and does not resolve}

The theorem isolates a genuine mechanism rather than concealing the noncoercive difficulty. In the general system, the right-hand sides of \eqref{eq:harmonicu}--\eqref{eq:harmonich} contain quadratic terms that can change both the harmonic coordinates and the complementary energy. Thus the unrestricted system may exhibit harmonic exchange, as demonstrated below on a two-hole complex. The theorem does not claim that all admissible interactions have bounded full-space attractors in this case.

By contrast, $B_0$ removes every nonlinear input to the harmonic equation and every dependence of the complementary equation on the fixed harmonic coordinates. The restricted system is therefore not merely a projected simulation: it is a well-defined model on each affine fibre. In the unforced case its attractor is the single harmonic state $(u_H,h_H)$ in that fibre. In the forced case, the complementary forcing produces a compact fibre-wise attractor, while the prescribed harmonic state remains unchanged. This is the precise sense in which harmonic directions can be controlled without introducing an artificial damping term.

A different extension would add harmonic damping terms $-\gamma Qu$ and $-\gamma Qh$ with $\gamma>0$. Such a model restores a coercive full-space linear operator and is suitable when decay of harmonic circulation is intended. It is not adopted here because it changes the dynamics rather than characterizing the topology-induced neutral directions of the original Hodge-Laplacian system.

\subsection{Reduced equations on an affine fibre}

The fibre construction can be written in coordinates that make its spectral content transparent. Fix $(u_H,h_H)\in\mathcal H^1\times\mathcal H^1$ and write
\[
u=u_H+u_0,
\qquad
h=h_H+h_0,
\qquad
u_0,h_0\in\mathcal V_0.
\]
For the interaction $B_0$, the equations for $(u_0,h_0)$ are
\begin{align}
\dot u_0&=-\nu\Delta_1u_0+B_0(u_0,u_0)-B_0(h_0,h_0)+P_0Pf,\\
\dot h_0&=-\eta\Delta_1h_0+B_0(u_0,h_0)-B_0(h_0,u_0)+P_0Pg.
\label{eq:fibrereduction}
\end{align}
No coefficient in the two reduced equations \eqref{eq:fibrereduction} depends on $(u_H,h_H)$. Hence all unforced harmonic fibres carry isomorphic complementary dynamics, translated by their fixed harmonic labels. In particular, the linearization at the unforced fibre equilibrium $(u_H,h_H)$ is
\[
\begin{pmatrix}-\nu\Delta_1|_{\mathcal V_0}&0\\0&-\eta\Delta_1|_{\mathcal V_0}\end{pmatrix}.
\]
Its spectrum lies in $(-\infty,-\beta_+]$, with $\beta_+=\min(\nu,\eta)\lambda_+$. Although the nonlinear terms modify the modal dynamics, their exact energy cancellation allows the linear coercive decay estimate to extend globally to the nonlinear complementary system.

This reduction also explains the numerical comparison in Figure~\ref{fig:fibres}. The left-hand panel is generated by the original admissible realization, for which the projected equations contain harmonic coupling terms. The right-hand panel is generated by \eqref{eq:fibrereduction}. The two runs therefore compare two mathematically distinct, explicitly specified systems rather than two parameter choices within one unexamined simulation.

\section{Deterministic numerical study}\label{sec:numerics}

All computations are deterministic. The complexes, orientations, matrices, initial conditions, forcings, and solver tolerances are fixed a priori; no random graph, random complex, random initial condition, or stochastic forcing is used. The numerical verification script uses SciPy's DOP853 method with relative tolerance $10^{-10}$ and absolute tolerance $10^{-12}$ \cite{Virtanen2020}.

The script follows the same algebraic order as the analysis. It first forms $d_0$ and $d_1$, verifies $d_1d_0=0$, and computes an orthonormal singular-value basis $H$ for $\ker d_0^*$. The projector is then $P=HH^*$, and the restricted Hodge matrix is $H^*\Delta_1H$. Its zero eigenspace defines the harmonic projector $Q$. This procedure gives the harmonic dimension, the positive spectral constant, and all projected initial fields without graph-dependent conventions that are left implicit.

For every run, the script evaluates three independent algebraic gates before time integration: skew-symmetry of the anticommutator matrix, trilinear antisymmetry of the interaction, and cancellation of the complete two-field nonlinear vector field. The harmonic-decoupled run additionally tests $QB_0(a,b)=0$. A failure of any gate above $10^{-10}$ raises an error rather than producing a figure. The numerical plots are therefore illustrations of a checked analytical structure, not evidence substituted for the proofs.

\subsection{Complexes and restricted spectra}

Three complexes are used. The triangulated disk is a six-sector fan with trivial first cohomology. The triangulated annulus has one harmonic direction. The third complex is a deterministic $6\times4$ triangulated rectangle with two separated unfilled squares. It has two harmonic directions and a nontrivial complementary space $\mathcal V_0$. Unlike an unfilled single cycle, whose divergence-free space can be entirely one-dimensional and harmonic, this two-hole example has enough room to display a meaningful distinction between harmonic and dissipative degrees of freedom. Figure~\ref{fig:complexes} gives the geometries, and Figure~\ref{fig:spectra} displays the restricted Hodge spectra. The red zero modes in the latter are exact topological kernel directions up to the stated numerical tolerance.

\begin{figure}
\centering
\includegraphics[width=\textwidth]{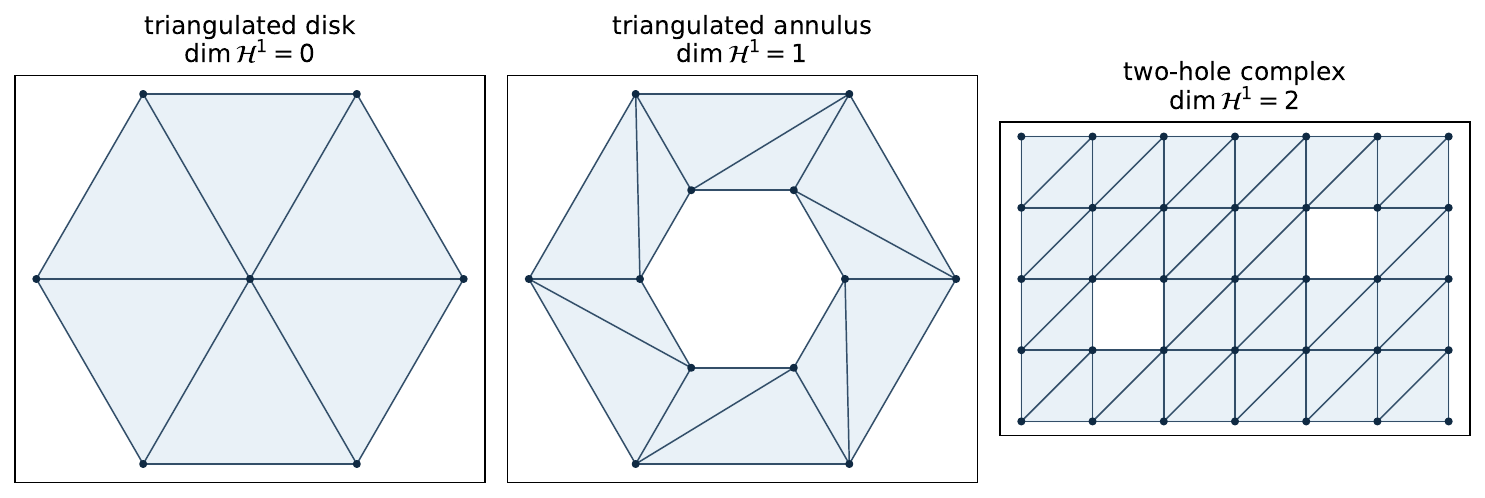}
\caption{Deterministic complex families. The disk has $\dim\mathcal H^1=0$, the annulus has $\dim\mathcal H^1=1$, and the two-hole complex has $\dim\mathcal H^1=2$.}
\label{fig:complexes}
\end{figure}

\begin{figure}
\centering
\includegraphics[width=\textwidth]{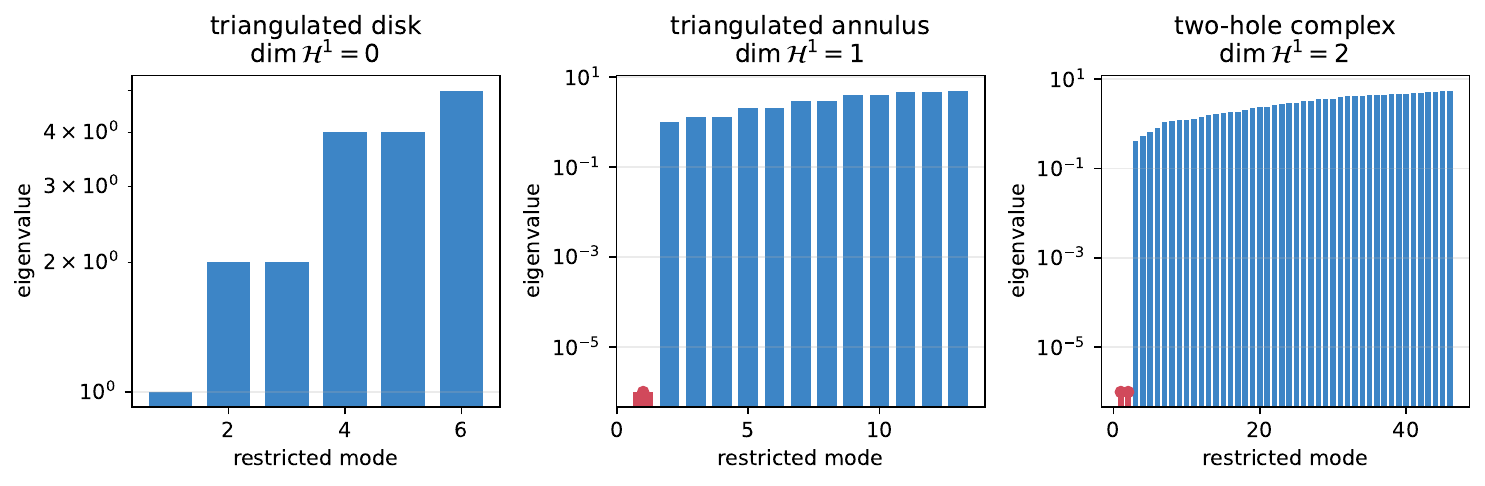}
\caption{Restricted Hodge spectra on the divergence-free spaces. Red markers identify harmonic zero modes. The disk is coercive, while the annulus and two-hole complex fail full-space coercivity.}
\label{fig:spectra}
\end{figure}

\subsection{Energy-law diagnostic}

For an edge space of dimension $m$, the fixed skew-symmetric matrix has nearest-neighbour entries
\[
J_{i,i+1}=0.22+0.03\bigl((3i)\bmod5\bigr),
\qquad
J_{i+1,i}=-J_{i,i+1},
\]
with closing entries $J_{0,m-1}=-0.17$ and $J_{m-1,0}=0.17$. The baseline parameters are $\nu=0.55$ and $\eta=0.35$. Projected trigonometric arrays provide deterministic initial states.

Before integration, the script checks the symmetry and idempotence of $P$, the relation $d_0^*P=0$, skew-symmetry of $M(a)$, trilinear antisymmetry, and total nonlinear energy cancellation. For the two-hole runs, the largest algebraic residual is below $1.34\times10^{-15}$. On the disk, Figure~\ref{fig:energy} compares unforced exponential decay with a bounded forced response. The right panel reports
\[
\frac{\left|E(t)-E(0)+2\int_0^t\bigl(\nu\|\Delta_1^{1/2}u(s)\|^2+
\eta\|\Delta_1^{1/2}h(s)\|^2\bigr)\,ds\right|}{E(0)}.
\]
The integral is evaluated by composite Simpson quadrature on 16,001 output times. The maximum normalized residual is $3.82\times10^{-10}$, making explicit that this is an a posteriori quadrature-and-solver diagnostic rather than the exact analytical cancellation statement of Theorem~\ref{thm:energy}.

\begin{figure}
\centering
\includegraphics[width=\textwidth]{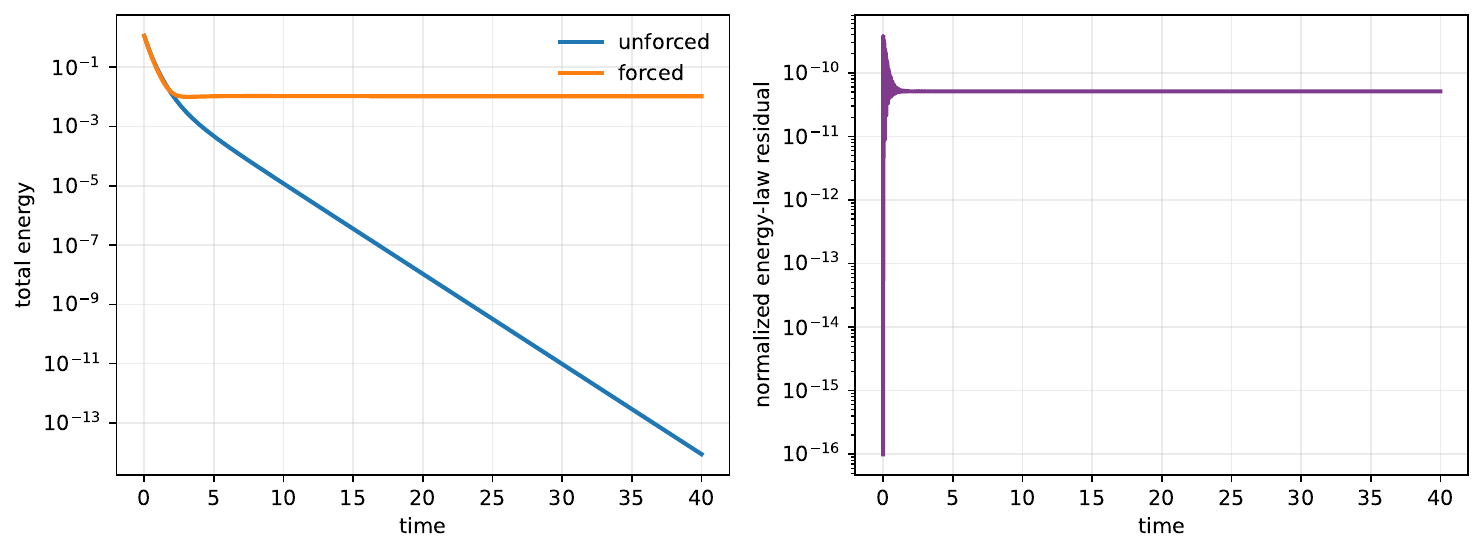}
\caption{Energy dynamics on the coercive disk. Left: unforced decay and a bounded deterministic forced response. Right: normalized residual of the integrated energy identity for the unforced run, using dense-output composite-Simpson quadrature.}
\label{fig:energy}
\end{figure}

\subsection{Two-hole harmonic fibres}

Figure~\ref{fig:fibres} addresses the noncoercive issue directly. In the left panel, the corrected interaction from Proposition~\ref{prop:toy} permits a nonzero change in harmonic energy on the two-hole complex; for the displayed deterministic trajectory its maximum change is $2.57\times10^{-3}$. This illustrates the transfer mechanism in \eqref{eq:harmonicu}--\eqref{eq:harmonich} without implying a universal residual-energy law. In the right panel, the harmonic-decoupled interaction from \eqref{eq:decoupledB} keeps the harmonic energy constant to a maximum change of $5.14\times10^{-16}$ while the complementary energy decays. The companion algebraic residual for harmonic decoupling is below $4.27\times10^{-16}$. Thus the computation distinguishes the general noncoercive case from the structured fibre-wise result of Theorem~\ref{thm:fibre}.

\begin{figure}
\centering
\includegraphics[width=\textwidth]{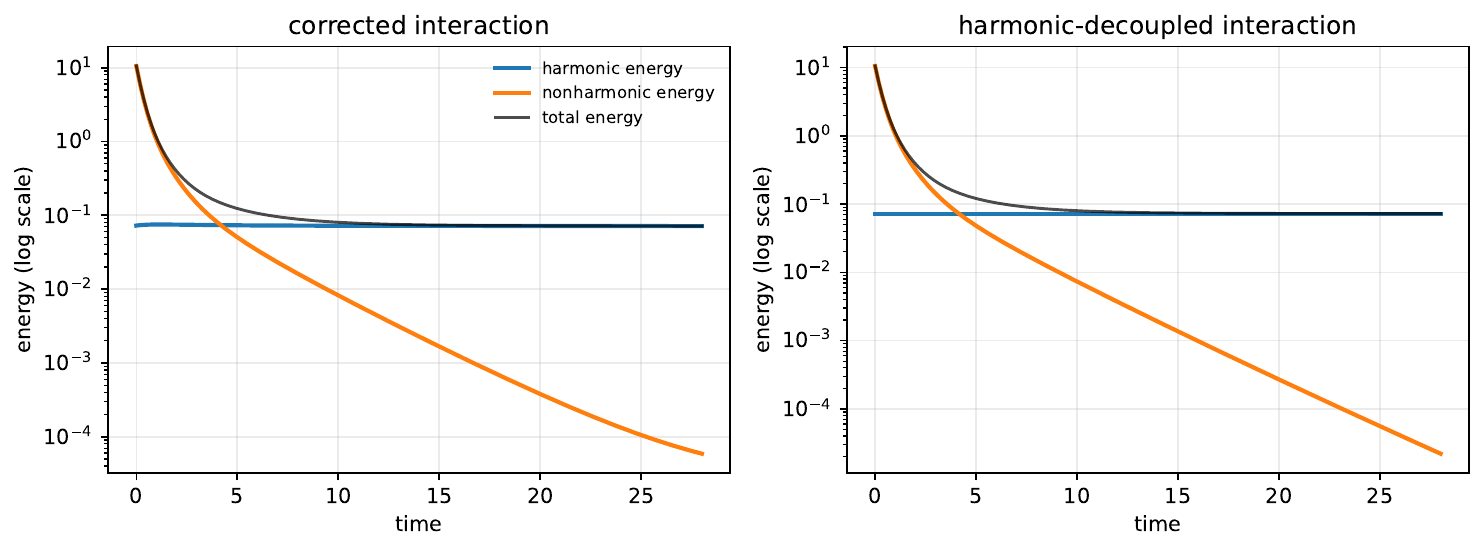}
\caption{Two-hole comparison on logarithmic energy scales. Left: the corrected interaction allows harmonic/nonharmonic energy exchange. Right: the harmonic-decoupled realization fixes the harmonic affine fibre while the complementary component dissipates.}
\label{fig:fibres}
\end{figure}

\section{Discussion}

The analysis separates three statements that should not be conflated. The trilinear cancellation identity supplies the energy law for a broad class of cell-complex edge-cochain interactions. The anticommutator $D(a)J+JD(a)$ is an explicit corrected realization of that class. The Hodge coercivity criterion then determines whether the standard full-space dissipative argument applies. In finite dimensions, the compact-attractor conclusion under coercivity is routine once an absorbing ball exists; the topology-sensitive availability of that ball is the distinctive result.

The noncoercive analysis makes the limitation constructive. General admissible interactions can transfer energy to and from harmonic cochains, so harmonic directions neither force nor exclude a nonzero limiting energy for every trajectory. The harmonic-decoupled construction identifies one sufficient mechanism that fixes the harmonic coordinates and yields attractors on affine fibres. Extending this theory to local cochain transport interactions, nonlinearities derived from a compatible discrete calculus, and quantitative dimension estimates remains open.

\appendix

\section{Orientation and numerical conventions}

Every oriented edge $(i,j)$ contributes a row to $d_0$ with entries $-1$ and $+1$ in the columns of $i$ and $j$. An oriented triangular face contributes a row to $d_1$ with sign determined by whether its boundary traversal agrees with the stored edge orientation. The identity $d_1d_0=0$ is then exact up to machine roundoff. An orthonormal basis $H$ of $\ker d_0^*$ is computed by singular-value decomposition, giving $P=HH^*$. The restricted Hodge matrix is $H^*\Delta_1H$; its zero eigenspace defines $Q$ and its positive spectrum supplies $\lambda_+$.

\section{Deterministic test specifications}

This appendix records the non-random choices used by the figure-generation script. For an edge space of dimension $m$, the initial data are
\[
u_0=0.62P\bigl(\cos(i+0.35)\bigr)_{i=0}^{m-1},
\qquad
h_0=0.46P\bigl(\sin(1.7i+0.20)\bigr)_{i=0}^{m-1}.
\]
For a run designed to display a harmonic fibre, a fixed harmonic bias is added to $u_0$ and the corresponding opposite-signed bias is added to $h_0$. The direction is obtained deterministically from a singular-vector calculation for $Q$ and normalized in the Euclidean edge norm. No random seed is used at any stage.

The skew matrix is sparse and fixed. Its adjacent entries are
\[
J_{i,i+1}=0.22+0.03((3i)\bmod5),
\qquad
J_{i+1,i}=-J_{i,i+1},
\]
with the closing pair $J_{0,m-1}=-0.17$ and $J_{m-1,0}=0.17$. The disk forcing, when used, is the projected pair
\[
f=0.065P\bigl(\cos(0.6i+0.1)\bigr)_{i=0}^{m-1},
\qquad
g=0.045P\bigl(\sin(0.9i+0.4)\bigr)_{i=0}^{m-1}.
\]
The script uses 16,001 equally spaced output times for the energy-law figure and 5,601 output times for the two-hole fibre diagnostic. The integration tolerances and the dense sampling have different roles: DOP853 controls the state approximation, while composite Simpson quadrature controls post-processing of the dissipation integral.

The algebraic validation threshold is $10^{-10}$. In the reported two-hole calculation, the maximum residual for the original anticommutator interaction is $1.34\times10^{-15}$. For the harmonic-decoupled realization, the maximum of the skew-symmetry, trilinear-antisymmetry, and $QB_0$ residuals is $4.27\times10^{-16}$. These values are stated to document deterministic numerical consistency; they are not used as assumptions in any theorem.

\section{Construction of the two-hole complex}

The two-hole example is a rectangular simplicial complex rather than a graph with a visually suggested hole. Its vertices are the lattice points
\[
(i,j),\qquad 0\leq i\leq6,\quad 0\leq j\leq4.
\]
Every unit square is divided along the same diagonal into two oriented triangles, except for the squares with lower-left corners $(1,1)$ and $(4,2)$, which are omitted. The two removed squares are separated by filled cells, and the resulting finite complex has two independent harmonic $1$-cochains. The construction is encoded directly from this face list, after which the script computes the harmonic dimension from the restricted Hodge spectrum rather than assuming it from a drawing.

This choice addresses the degeneracy of an unfilled single cycle. A one-dimensional divergence-free phase space is entirely harmonic and cannot distinguish exchange between harmonic and complementary directions. The two-hole complex has both types of directions. It therefore permits the direct comparison in Figure~\ref{fig:fibres}: a general admissible interaction can move energy between the two parts, whereas the harmonic-decoupled interaction fixes the harmonic affine fibre by construction.

\section{Algebraic verification protocol}

The numerical protocol checks identities in their matrix form before any ODE is solved. For a complex with edge dimension $m$, a representative projected triple $a,b,c\in\mathcal V\subset\mathbb R^m$ is formed from deterministic trigonometric coordinate arrays. The following residuals are computed:
\begin{align*}
&r_P=\max\{\|P^2-P\|,\|P-P^*\|,\|d_0^*P\|\},\\
&r_M(a)=\|M(a)+M(a)^*\|,\\
&r_b(a,b,c)=\bigl|\langle B(a,b),c\rangle+\langle B(a,c),b\rangle\bigr|,\\
&r_{\mathcal N}(u,h)=\bigl|\langle\mathcal N(u,h),(u,h)\rangle\bigr|.
\end{align*}
For the harmonic-decoupled realization, the additional range residual
\[
r_Q(a,b)=\|QB_0(a,b)\|
\]
is evaluated. The maximum of the applicable quantities must be below $10^{-10}$. This gate distinguishes exact algebraic structure, represented numerically up to floating-point roundoff, from the separate approximation error of a time integrator.

The energy-law plot uses a different check. Given an unforced numerical trajectory, it compares $E(t)-E(0)$ with the composite-Simpson approximation to the dissipation integral in \eqref{eq:integratedbalance}. This residual is expected to be larger than $r_M$, $r_b$, or $r_{\mathcal N}$ because it contains both state-approximation and quadrature effects. The revised dense-output calculation reduces the maximum normalized residual to $3.82\times10^{-10}$, while the algebraic residuals remain at the $10^{-15}$ scale. Reporting the two categories separately prevents an a posteriori quadrature diagnostic from being confused with the exact cancellation result.

\section*{Data Availability Statement}
No external datasets were used in this study. 

\section*{Conflict of interest}
The author declares no conflict of interest.

\bibliographystyle{plain}
\bibliography{mhd_gds_refs}

\end{document}